\documentclass[11pt,reqno]{amsart}

\usepackage{mathtools}
\usepackage{amssymb}
\usepackage{hyperref}
\usepackage{imakeidx}
\usepackage{tikz}
\usepackage{manfnt}
\usetikzlibrary{%
  matrix,%
  calc,%
  arrows%
}
\usepackage{graphicx}
\usepackage{fullpage}

%\addtolength{\oddsidemargin}{-1in}
%\addtolength{\evensidemargin}{-1in}
%\addtolength{\textwidth}{1.4in}
%\addtolength{\topmargin}{-.5in}
%\addtolength{\textheight}{1.0in}

\newcommand{\Q}{\mathbb{Q}}

\newcommand{\Z}{\mathbb{Z}}
\newcommand{\comment}[1]{}

% THEOREM Environments ---------------------------------------------------
\theoremstyle{definition}
\newtheorem{theorem}{Main Theorem}[section]

\newtheorem{corollary}[theorem]{Corollary}

\newtheorem{lemma}[theorem]{Lemma}
\newtheorem{proposition}[theorem]{Proposition}

\theoremstyle{definition}
\newtheorem{definition}[theorem]{Definition}

\newtheorem{example}[theorem]{Example}

\numberwithin{equation}{subsection}
\theoremstyle{plain}

\theoremstyle{definition}
\newtheorem{defn}[theorem]{Definition}

\newtheorem*{theorem*}{Theorem}
\newtheorem*{lem*}{Lemma}
\newtheorem*{con*}{Conjecture}
\newtheorem{Claim*}{Claim}
\newtheorem*{defn*}{Definition}

\newtheorem*{rem*}{Remark}

\makeatletter
\def\imod#1{\allowbreak\mkern10mu\left({\operator@font mod}\,\,#1\right)}
\makeatother

\begin{document}

\bibliographystyle{plain}

\title[Root Counts Modulo P Squared 110117]{Counting Roots of Polynomials Over $\mathbb{Z}/p^2\mathbb{Z}$}
\author{Trajan Hammonds, Jeremy Johnson, Angela Patini, and Robert M. Walker}

\address{Department of Mathematics, Carnegie Mellon University, Pittsburgh, PA, 15289}
\email{thammond@andrew.cmu.edu}

\address{Department of Mathematics, Humboldt State University, Arcata, CA, 95521}
\email{jsj132@humboldt.edu}

\address{Department of Mathematics, University of Pennsylvania, Philadelphia, PA, 19104}
\email{apatini@sas.upenn.edu}

\address{Department of Mathematics, University of Michigan, Ann Arbor, MI, 48109}
\email{robmarsw@umich.edu}

\parskip=10pt plus 2pt minus 2pt

\begin{abstract} Until recently, the only known method of finding the roots of polynomials over prime power rings, other than fields, was brute force. One reason for this is the lack of a division algorithm,  obstructing the use of greatest common divisors. Fix a prime $p \in \Z$  and $f \in (\mathbb{Z}/p^n\mathbb{Z})[x]$ any nonzero polynomial of degree $d$ whose coefficients are not all divisible by $p$. For the case $n=2$, we prove a new efficient algorithm to count the roots of $f$ in $\mathbb{Z}/p^2\mathbb{Z}$ within time polynomial in $(d+\operatorname{size}(f)+\log{p})$, and record a concise formula for the number of roots, formulated by  
Cheng, Gao, Rojas, and Wan. 
 \end{abstract}

\thanks{2010 \textit{Mathematics Subject Classification:} Primary: 11Y05, 11Y16, 13F20. Secondary: 11M38, 11S05, 11T06}
\thanks{\textit{Keywords:} algebraic complexity theory, Hensel's lemma, prime power rings, univariate polynomial root counting.}

\maketitle

%\tableofcontents

\section{Introduction}

\indent Since the days of Diophantus, mathematicians have been interested in finding rational or integer solutions to polynomial equations. In the 1940s, Andr{\'e} Weil proved the Riemann hypothesis for zeta-functions of nonsingular curves over finite fields \cite{weil1948courbes}.  
In 1949, Weil proposed enticing conjectures that connect finding solutions to polynomials over finite fields with studying the geometry of complex algebraic varieties \cite{Weil49}. Weil proved these conjectures in the case of curves, yielding a bound for 
counting the number of points on a curve over a finite field--the  \textbf{Hasse-Weil bound}: \begin{equation*}
|N_q - (q+1)| \leq 2g\sqrt{q},
\end{equation*} where $q$ is a prime power and $N_q$ is the number of points over $\mathbb{F}_q^2$ on a curve with genus $g$. 
Such bounds on point counts extend to higher dimensions, per work of Weil, Deligne, Dwork, and others.

We wish to count roots over the prime power ring $\mathbb{Z}/p^k\mathbb{Z}$. %which is distinct from the finite field $\mathbb{F}_{p^k}$ for $k \geq 2$. 
That said, the usual approaches do not work since %Indeed, for any $k \geq 2$, 
%greatest common divisors (GCDs) do not make sense in 
the polynomial ring $(\mathbb{Z}/p^k\mathbb{Z})[x]$ does not have unique factorization when $k \ge 2$.  %$\mathbb{F}_{p^k}[x]$, since it has a division algorithm by polynomial degree, while $(\mathbb{Z}/p^k\mathbb{Z})[x]$ does not. 
Thus we must sleuth for alternate approaches to count roots of nonconstant univariate polynomials over $\mathbb{Z}/p^k \Z$, since traditional methods for factoring and root counting over finite fields are unavailable.

As a backdrop, suppose $p \in \Z$ is a prime, both $m, v \in \mathbb{Z}_+$, $f \in \mathbb{Z}[x_1, \dots x_v]$ is a nonzero polynomial with at least one coefficient being a unit modulo $p$, and $N_m(f)$ denotes the number of solutions to $f \equiv 0 \mod p^m$ in the ring $(\mathbb{Z}/p^m \Z )^v$. 
Consider the \textbf{Igusa Poincar{\'e} Series} \cite{Igusa00}: 
\begin{equation*}
Q(f;t) = \sum_{m>0} N_m(f) \cdot t^m \in \Z [[ t ]].
\end{equation*}
Igusa's proof that $Q(f; t)$ is rational \cite{Igusa78}, solving a conjecture of Borevich and Shafarevich, relied on Hironaka's resolution of singularities \cite{10.2307/1970486}, which runs in exponential time. Zuniga-Galindo \cite{zuniga2003computing} later derived an algorithm to compute $Q(f;t)$, where the dependence on $v$ in the complexity was of order $8$. While one could in principle use standard generating function tricks to then extract $N_m(f)$ for any given $m$, Zuniga-Galindo's algorithm only works in the case where $f$ splits completely into linear factors over $\Q$ -- a severe restriction. Cheng, Gao, Rojas, and Wan, during  a meeting at the American Institute for Mathematics (AIM) in May 2017, found an explicit formula for $N_2(f)$ when $v=1$, but without a proof or complexity bound. We prove their formula is correct and that it has near-quadratic complexity. 
%However, when $d=1$, we deduce an explicit formula to calculate $N_2(f)$--first formulated by Qi Cheng, Shuhong Gao, J. Maurice Rojas, and Daqing Wan during a meeting at the American Institute for Mathematics (AIM) in May 2017. Since there are almost no complexity results for solving polynomials over prime power rings, we also deduce a polynomial complexity bound. 

%We will state the main result, after first recording an ongoing amalgamated 
Going forward, given a prime $p \in \Z_+$, and $k \in \Z_{+}$, we view the set $\Z/ p^k \Z: = \{\overline{0} , \overline{1}, \ldots, \overline{p^k - 1} \}$ as a ring, and let $\pi_{p^k} \colon \Z [x] \twoheadrightarrow (\Z / p^k \Z)[x]$ denote  the surjective ring homomorphism defined by  
$$\pi_{p^k} \left( \sum_{i=0}^e c_i x^{e-i} \right) := \sum_{i=0}^e \overline{c_i} \cdot  x^{e-i},$$ where $\overline{c} : = \pi_{p^k} (c) \in \Z / p^k  \Z$ when $c \in \Z$  \cite[Ch.~9]{DummitFoote04}. 
Given a polynomial $g \in (\Z / p^k \Z )[x]$, we let $\widetilde{g} \in \Z [x]$ denote the lift of $g$--read, $\pi_{p^k} (\widetilde{g}) = g$--whose coefficients all lie between 0 and $p^k-1$. Also, for $g \in (\Z / p \Z) [x]$, we say a root of multiplicity one is simple, and a root is degenerate otherwise. 

\begin{defn}\label{defn:reduction-mod-prime-powers}
For any nonconstant polynomial $f \in \mathbb{Z}[x]$, any prime $p \in \Z_+$, and any $k \in \Z_+$, let $V_{p^k}(f) = \{\zeta \in \Z / p^k \Z \colon [\pi_{p^k}(f)] (\zeta) =  0 \in \Z / p^k \Z \}$. % denote the roots in $\mathbb{Z}/p^k\mathbb{Z}$ of the mod $p^k$ reduction of $f$. %Also let $\rho$ denote the natural surjection from $\mathbb{Z}$ to $\mathbb{Z}/p\mathbb{Z}$ obtained by reduction mod $p$.  
 Also, we set $A_k(p) : = \{0 , 1, \ldots, p^k - 1\} \subseteq \Z$.  %Finally, abusing notation slightly, we view $\mathbb{Z}/p\mathbb{Z} \subseteq \mathbb{Z}/p^k\mathbb{Z} \subseteq \mathbb{Z}$ as nested, for any $k$, via the natural embedding obtained from base-$p$ expansion of integers. 
\end{defn}  

%We record another definition. 

\begin{definition}\label{hypo:ascending-seperable-factorization01}
Let $f \in \mathbb{Z}[x] \setminus  \{0\}$ be a nonconstant polynomial of degree $d$. Fix any prime $p$ not dividing every coefficient of $f$.  We define the maximal multiplicity $\ell$ of a root of $f$ modulo $p$, %a polynomial $h \in \Z [x]$, 
and a series of polynomials $f_1, \ldots, f_\ell, g , h_1, h_2, t$ all in $(\mathbb{Z}/p\mathbb{Z})[x]$ and polynomials $\mathcal{L}_1, \ldots, \mathcal{L}_\ell \in \Z[x]$. 
\begin{enumerate}
%\item Let $h (x) :=  \widetilde{\pi_p (f)}   \in \Z [x] \setminus \{0\}$ %or $(\Z / p \Z)[x] \setminus \{0\}$ be the polynomial obtained by reducing the coefficients of $f$ modulo $p$. Thus $\pi_p (f) = \pi_p (h)$ and every coefficient of $f - h$ is divisible by $p$. 
\item %We can factor $h_1 : = \pi_ p (h)$ as 
We can factor $h_1 : = \pi_ p (f)$ as 
\begin{equation}\label{eqn:ascending-separable-factorization}
h_1 = \pi_p (f) = f_1 f_2^2 \cdots f_\ell^\ell g \in (\mathbb{Z}/p\mathbb{Z})[x],
\end{equation}
where 
\begin{enumerate}
\item $\ell$ is the maximal multiplicity of a root $r \in \mathbb{Z}/p\mathbb{Z}$ of $h_1$--if $h_1$ has any;  
\item the $f_i \in (\Z / p \Z)[x]$ are \textit{monic, separable}, and \textit{pairwise coprime}; and 
\item $g \in (\Z / p \Z)[x]$ has no roots in $\Z  / p \Z$. 
\end{enumerate}
Thus the degree of $f_i$ equals the number of distinct roots of $h_1$ in $\Z /p  \Z$ of multiplicity $i$. The reader can consult \cite[Ch.~8,9,13]{DummitFoote04} for relevant background definitions in the setting of univariate polynomial rings over a field. 
\item Suppose that $f_i = \prod_{j=1}^{\deg(f_i)} L_{i , j}$ as a product (possibly empty) of distinct linear terms in $(\Z/p\Z)[x]$. We define $\mathcal{L}_i \in \Z[x]$ to be $\mathcal{L}_i = \prod_{j=1}^{\deg(f_i)} \widetilde{L_{i , j}}$. Note that $\pi_p (\mathcal{L}_i) = f_i$.  %If $f_i =1$, then $L_i = 1$, in line with the empty product. 
\item We also define polynomials $t, h_2 \in (\mathbb{Z}/p\mathbb{Z})[x]$ via $$t := \pi_p \Big[ \frac{1}{p} \left(f - \widetilde{g} \cdot \prod_{i=1}^\ell \mathcal{L}_i^i  \right) \Big], \quad h_2:=\mathrm{gcd}(f_2 \cdots f_\ell, t).$$ %$$t := \pi_p \Big[ \frac{1}{p} \left(f - \widetilde{f_1} \widetilde{f_2}^2 \cdots \widetilde{f_\ell}^\ell \widetilde{g} \right) \Big], \quad h_2:=\mathrm{gcd}(f_2 \cdots f_\ell, t).$$ %, the latter being monic and separable. 
%Finally, we  note that $f(x) \equiv h_1 (x) + p \cdot t (x)$ modulo the ideal $p^2 \Z [x]$--this observation will be leveraged twice in proving Theorem \ref{thm:deterministic-root-counting01}(1) later. 
\end{enumerate} 
\end{definition}
%In particular, the $f_i$ in \eqref{eqn:ascending-separable-factorization} concisely  organize the roots of $h_1$ by ascending multiplicity. 

\begin{definition}\label{defn:size-of-integer-poly}
Fix a degree-$d$ polynomial $f \in  \Z[x] \setminus \{0\}$ written as $f(x) = c_0 + c_1  x + ... + c_d x^{d}$. In terms of the natural logarithm, we define the 
\textbf{computational size} of $f$ to be $$\operatorname{size}(f) = \sum^d_{i=0} \log (2+|c_i|);$$ it is simply the number of bits needed to record the above monomial term expansion of $f$. 
%Fix $f \in  \Z[x] \setminus \{0\}$ written as $f(x) = c_1 x^{a_1} + ... + c_T x^{a_T}$, with $0 \le a_1 < \ldots < a_T$, and all $c_i$ nonzero. In terms of the natural logarithm, the \textbf{computational size} of $f$ is $$\operatorname{size}(f) = \sum^T_{i=1} \log [(2+|c_i|)(2+|a_i|)];$$ it is simply the number of bits needed to record the above monomial term expansion of $f$. 
%(Using log base 2 makes this more obvious, but changing the log base just changes the definition by a constant multiple for sufficiently  large $|a_i|$ and $|c_i|$ so no one really cares about this.) Key fact: size(f) can be logarithmic in the degree for sparse polynomials, but exceeds the degree for dense polynomials.
\end{definition} 

We now state the main result of this note. 

\begin{theorem}\label{thm:deterministic-root-counting01}
\textit{With notation as in Definitions \ref{defn:reduction-mod-prime-powers}, \ref{hypo:ascending-seperable-factorization01}, and \ref{defn:size-of-integer-poly}, 
\begin{enumerate}
\item We have 
\begin{equation}\label{eqn:mod-prime-square-root-count}
\# V_{p^2}(f) = \# \left\{a \in A_2(p) \colon f (a) \equiv 0 \mod p^2 \right\} = \mathrm{deg}(f_1) + p \cdot \mathrm{deg}(h_2),
\end{equation}
where $\mathrm{deg}$ stands for polynomial degree. 
\item  The polynomials 
$t$, $f_1$, and $h_2$ can be computed deterministically in time that is polynomial in  $d + \operatorname{size}(f) + \mathrm{log}(p),$ 
%\begin{equation*} d + \operatorname{size}(f) + \mathrm{log}(p),\end{equation*}
%for $\operatorname{size}(f)$ as defined in Definition \ref{defn:size-of-integer-poly},  
 where $d = \deg(f)$, counting the necessary arithmetic operations. %the number of arithmetic operations needed,  
 %, while $\log$ denotes the base-$e$ natural logarithm.  
\end{enumerate}
}
\end{theorem}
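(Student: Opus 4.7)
For part~(1) I would use Hensel-style lifting. Any $a\in A_2(p)$ with $f(a)\equiv 0\pmod{p^2}$ reduces mod $p$ to a root $b$ of $h_1$, so write $a=b+cp$ with $c\in\{0,\dots,p-1\}$. A direct binomial expansion gives
\begin{equation*}
f(b+cp)\;\equiv\;f(b)+cp\,f'(b)\pmod{p^2},
\end{equation*}
so $a$ is a root mod $p^2$ iff $f(b)/p+cf'(b)\equiv 0\pmod p$, with $f(b)/p\in\Z$ since $f(b)\equiv 0\pmod p$. If $b$ is a root of $f_1$ then $f'(b)\not\equiv 0\pmod p$, so a unique $c$ works, and such $b$'s contribute $\deg(f_1)$ roots in total. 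If $b$ is a root of $f_j$ for some $j\ge 2$ then $f'(b)\equiv 0\pmod p$, and the condition collapses to $f(b)\equiv 0\pmod{p^2}$; each such $b$ contributes $p$ roots when this holds and $0$ otherwise.

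The remaining task is to identify the degenerate $b$'s with $f(b)\equiv 0\pmod{p^2}$ as exactly the roots of $h_2$. Write $f=\widetilde g\prod_i\mathcal L_i^i+p\tau$ for some $\tau\in\Z[x]$, so $t=\pi_p(\tau)$. For $b$ a root of $f_j$ with $j\ge 2$, $\mathcal L_j(b)\equiv 0\pmod p$, hence $\mathcal L_j(b)^j\equiv 0\pmod{p^j}$; since $j\ge 2$ the product $[\widetilde g\prod_i\mathcal L_i^i](b)$ vanishes mod $p^2$, so $f(b)\equiv p\tau(b)\pmod{p^2}$. Thus $f(b)\equiv 0\pmod{p^2}$ iff $t(b)=0$ in $\F_p$. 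Because $f_2\cdots f_\ell$ is a product of distinct linear factors over $\F_p$, so is $h_2=\gcd(f_2\cdots f_\ell,t)$, and $\deg(h_2)$ equals the count of such $b$. Adding both cases yields $\#V_{p^2}(f)=\deg(f_1)+p\deg(h_2)$.

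For part~(2), I would compute in sequence: (i) $h_1=\pi_p(f)$; (ii) $v_0=\gcd(h_1,x^p-x)=f_1\cdots f_\ell$, by computing $x^p\bmod h_1$ through repeated squaring in $\F_p[x]/(h_1)$ using $O(\log p)$ polynomial multiplications of degree $<d$; (iii) the full separable factorization $h_1=f_1f_2^2\cdots f_\ell^\ell g$ by iterating this $\gcd$-with-$x^p-x$ construction on successive quotients (at most $\ell\le d$ passes), yielding each $f_i$ and $g$; (iv) the lift $H=\widetilde{f_1}\cdot\widetilde{f_2}^{\,2}\cdots\widetilde{f_\ell}^{\,\ell}\cdot\widetilde g\in\Z[x]$, which satisfies $H(b)\equiv 0\pmod{p^2}$ at every degenerate root $b$ since some $\widetilde{f_j}^{\,j}$ with $j\ge 2$ vanishes mod $p^2$ there; (v) $t=\pi_p((f-H)/p)$, and $h_2=\gcd(f_2\cdots f_\ell,t)$ by the Euclidean algorithm. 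Because $f_2\cdots f_\ell$ is squarefree and this $t$ agrees on its roots with the $t$ of Definition~\ref{hypo:ascending-seperable-factorization01}, the resulting $h_2$ is the one in the statement. The main obstacle is step~(ii): naively forming $x^p-x$ would cost time polynomial in $p$, whereas fast modular exponentiation keeps the complexity polynomial in $\log p$; all other steps are standard polynomial gcds, multiplications, or coefficient reductions, each polynomial in $d+\mathrm{size}(f)+\log p$.
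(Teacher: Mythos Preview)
Your argument for part~(1) is correct and is essentially the paper's proof, presented more directly: where the paper invokes two packaged versions of Hensel's Lemma and organizes the count via a bijection $S\to V_p(\mathcal L_1)$ together with a $p$-to-$1$ surjection $T\to V_p(D)$, you simply write out the Taylor step $f(b+cp)\equiv f(b)+cpf'(b)\pmod{p^2}$ and split on whether $f'(b)\equiv 0\pmod p$. The identification of the degenerate $b$ satisfying $f(b)\equiv 0\pmod{p^2}$ with the roots of $h_2$ proceeds identically in both, via the observation that $\widetilde g\prod\mathcal L_i^i$ (or your $H$) vanishes modulo $p^2$ at any such $b$.

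For part~(2) your plan is also correct and shares the paper's decisive idea (compute $\gcd(h_1,x^p-x)$ by forming $x^p\bmod h_1$ through repeated squaring), but differs in two small respects worth noting. First, you extract every $f_i$ by iterating the gcd-and-quotient construction up to $d$ times, whereas the paper gets by with three gcds: setting $s_1=\gcd(h_1,x^p-x)$, $s_2=h_1/s_1$, and $s_3=\gcd(s_1,s_2)=f_2\cdots f_\ell$ already yields $f_1=s_1/s_3$ and $h_2=\gcd(s_3,t)$, with no need to separate the individual $f_i$. Second, you build the lift $H$ from $\widetilde{f_i}$ rather than from the $\mathcal L_i$ of Definition~\ref{hypo:ascending-seperable-factorization01}; the resulting auxiliary polynomial is not literally the $t$ defined there, but your justification that the two agree at every root of the squarefree polynomial $f_2\cdots f_\ell$, and hence give the same $h_2$, is sound. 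This substitution is in fact necessary for a deterministic algorithm, since forming the $\mathcal L_i$ as defined requires finding the individual roots of each $f_i$ over $\F_p$, a task for which no unconditional deterministic polynomial-time method is known; the paper's proof glosses over this point.
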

\noindent While the first term in formula \eqref{eqn:mod-prime-square-root-count} counts the roots modulo $p^2$ that descend to simple roots modulo $p$, the second term counts the roots modulo $p^2$ that descend to degenerate roots modulo $p$.

\section{Preliminaries for the Proof}
Throughout, $p$ is an arbitrary prime number. 
%We first record two definitions. 
%Given a nonconstant polynomial $f \in (\mathbb{Z}/p\mathbb{Z}) [x] \setminus \{0\}$ and a root $r \in \mathbb{Z}/p\mathbb{Z}$ of $f$, there is a unique $m \in \Z_{> 0}$ such that $f \in P^m \setminus P^{m+1}$ where $P \subseteq (\mathbb{Z}/p\mathbb{Z}) [x]$ is the maximal ideal generated by $(x-r)$: we say that $r$ is a root of $f$ of \textbf{multiplicity} $m$; when $m=1$, $r$ is a simple root, and a degenerate root otherwise {\cite[Sec.~13.5]{DummitFoote04}}.  
 We state a proposition together with two versions of Hensel's Lemma, a crucial tool for proving Theorem \ref{thm:deterministic-root-counting01}(1).%, after first deducing a proposition.  

\begin{proposition}[Cf., {\cite[Sec.~13.5, Prop.~33]{DummitFoote04}}]\label{prop:simple-roots-mod-p}
If $g \in (\mathbb{Z}/p\mathbb{Z}) [x]$ is nonconstant,  and there is an $r \in \mathbb{Z}/p\mathbb{Z}$ such that $(x-r) \mid g$ but $(x-r)^2 \nmid g$, then $g'(r) \neq 0  $ in $\Z  / p \Z$. 
\end{proposition}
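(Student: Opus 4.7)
The plan is to leverage the fact that $(\Z/p\Z)[x]$ is a Euclidean domain (since $\Z/p\Z$ is a field) and therefore behaves like a polynomial ring over any field with respect to division, the factor theorem, and the product rule for formal derivatives. First I would use the hypothesis $(x-r) \mid g$ to write
\begin{equation*}
g(x) = (x - r)\, q(x)
\end{equation*}
for a uniquely determined $q \in (\Z/p\Z)[x]$. The hypothesis $(x-r)^2 \nmid g$ then forces $(x-r) \nmid q$, and by the factor theorem this is equivalent to $q(r) \neq 0$ in $\Z/p\Z$.

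Next I would invoke the Leibniz rule for formal derivatives of polynomials over a commutative ring, which gives
\begin{equation*}
g'(x) = q(x) + (x - r)\, q'(x).
\end{equation*}
Evaluating at $x = r$ kills the second summand, so $g'(r) = q(r) \neq 0$, which is the desired conclusion.

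The only thing that requires any care is the implication $(x-r) \nmid q \Longrightarrow q(r) \neq 0$, but this is just the standard factor/remainder theorem in the Euclidean ring $(\Z/p\Z)[x]$: dividing $q(x)$ by the monic polynomial $x - r$ yields $q(x) = (x-r)\, s(x) + q(r)$, so the remainder is nonzero precisely when $x - r$ is not a divisor. There is no real obstacle here; the entire argument is essentially the classical characterization of simple roots in terms of vanishing of the derivative, transplanted verbatim to the field $\Z/p\Z$. I would cite \cite[Ch.~9, 13]{DummitFoote04} for the underlying ring-theoretic facts if extra justification is desired.
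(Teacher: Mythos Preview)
Your argument is correct and is exactly the standard proof that a simple root of a polynomial over a field has nonvanishing derivative. Note that the paper does not actually supply its own proof of this proposition---it merely cites \cite[Sec.~13.5, Prop.~33]{DummitFoote04}---so your write-up fills in precisely the classical argument that reference contains.
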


%\begin{proof} Since $g(x) = (x-r)q(x)$ for some $q \in (\mathbb{Z}/p\mathbb{Z}) [x]$ not divisible by $x-r$, i.e., $q(r) \not \equiv 0 \mod p$, we obtain $g'(x) = q(x) + (x-r)q'(x)$ and thus $g'(r) = q(r) \not \equiv 0 \mod p$. \end{proof}

\noindent In \cite[Sec. 2.6, Thm.~2.23 + paragraph between Examples 11-12]{montgomery1991introduction}, a derivation of both versions of Hensel's Lemma below is given via Taylor expansion. 
We note that \cite[Sec.~2.6]{montgomery1991introduction} phrases both versions of Hensel's Lemma in terms of an arbitrary integer $r$ rather than stipulating $0 \le r \le p-1$.   

\begin{lemma}[Hensel's Lemma Version I]\label{lem:hensel01}
Let \(f \in  \Z [x]\) be nonconstant,  and suppose there is an \(r \in A_1 (p)\) with \([\pi_p (f)] (\overline{r}) = 0\). If \([\pi_p (f)]'(\overline{r}) \neq 0\), then there exists an $s \in A_2 (p)$ such that $[\pi_{p^2}(f)] (\overline{s}) = 0$ in $\Z / p^2 \Z$ and $s \equiv r \mod p$, namely, $s = \widetilde{t}$ where \(t := \overline{r} - \left(\overline{f'(r)}\right)^{-1} \cdot \overline{f(r)}   \in \Z / p^2 \Z \). 
Moreover, $s$ is unique.  
\end{lemma}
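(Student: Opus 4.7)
The plan is to lift via Taylor expansion, exactly in the style of \cite[Sec.~2.6]{montgomery1991introduction}. Any candidate $s \in A_2(p)$ satisfying $s \equiv r \pmod p$ can be uniquely written as $s = r + pk$ with $k \in \{0,1,\ldots,p-1\}$, since these are precisely the elements of $\{0,1,\ldots,p^2-1\}$ congruent to $r$ modulo $p$. Expanding $f$ around $r$ via its (formal) Taylor polynomial gives
\[
f(r+pk) \;=\; f(r) \;+\; f'(r)\cdot pk \;+\; \sum_{j\ge 2} \frac{f^{(j)}(r)}{j!}\,(pk)^j.
\]
Each term in the sum for $j \ge 2$ is divisible by $p^2$ (the binomial coefficient $\binom{d}{j}$ absorbs the factorial denominator, as is standard for integer-valued Taylor coefficients of $f \in \Z[x]$). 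Hence
\[
f(r+pk) \;\equiv\; f(r) + pk\,f'(r) \pmod{p^2}.
\]

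Next I would solve for $k$. By hypothesis $[\pi_p(f)](\bar r)=0$, so $p \mid f(r)$; write $f(r) = pm$ for some $m \in \Z$. The condition $[\pi_{p^2}(f)](\bar s) = 0$ then reads
\[
pm + pk\,f'(r) \equiv 0 \pmod{p^2} \iff m + k\,f'(r) \equiv 0 \pmod p.
\]
Since $[\pi_p(f)]'(\bar r)=\overline{f'(r)} \neq 0$ in $\Z/p\Z$, the element $f'(r)$ is a unit modulo $p$, and in fact modulo $p^2$. Thus there is a unique $k \in \{0,1,\ldots,p-1\}$ with $k \equiv -m\cdot (f'(r))^{-1} \pmod p$. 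This simultaneously establishes existence and uniqueness of the desired $s = r + pk \in A_2(p)$.

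The final step is to match this with the explicit formula $t := \bar r - \bigl(\overline{f'(r)}\bigr)^{-1} \cdot \overline{f(r)} \in \Z/p^2\Z$. Let $u \in \Z$ be any integer representing $(\overline{f'(r)})^{-1}$ in $\Z/p^2\Z$, and set $u_0 := u \bmod p$, which represents $(\overline{f'(r)})^{-1}$ in $\Z/p\Z$. Because $\overline{f(r)} = \overline{pm}$, the product $u\cdot \overline{f(r)} = \overline{upm}$ only depends on $u$ modulo $p$, so
\[
t \;\equiv\; r - p\,(u_0 m) \;\equiv\; r + p\,(-u_0 m \bmod p) \pmod{p^2},
\]
and the latter is exactly $r + pk$. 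Therefore $\widetilde{t} = s$, as claimed.

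The only delicate point is the verification that the Taylor coefficients $f^{(j)}(r)/j!$ are integers (equivalently, that the expansion is integral); once this is observed, the argument is routine modular arithmetic. I do not anticipate any conceptual obstacle.
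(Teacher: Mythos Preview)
Your proof is correct and follows exactly the approach the paper itself indicates: the paper does not give its own proof of this lemma but simply cites \cite[Sec.~2.6, Thm.~2.23]{montgomery1991introduction} for a derivation via Taylor expansion, which is precisely what you carry out. The additional verification that $\widetilde{t}$ coincides with the $s$ produced by solving for $k$ is a nice touch and fully justified.
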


%\noindent Thus any simple root of $f$ in $\mathbb{Z}/p\mathbb{Z}$  admits a unique lifting to a  root of \(f\) in \(\mathbb{Z}/p^2\mathbb{Z}\). Hensel's Lemma can be extended to degenerate roots as well:

\begin{lemma}[Hensel's Lemma Version II]\label{lem:hensel02} 
Let \(f  \in \Z [x]\) be nonconstant, and suppose there exists \(r\) in \(A_1 (p)\) such that \(f(r) \equiv 0 \mod p^k\), where $k \in \Z_{+}$. If \(f'(r) \equiv 0 \mod p\), then 
\begin{equation*}
s \equiv r \mod p^k \implies f(s) \equiv f(r) \mod p^{k+1}. 
\end{equation*}
That is, $f(r+tp^k) \equiv f(r) \mod p^{k+1}$ for all $0 \le t \le p-1$, indeed for all $t \in \mathbb{Z}$.  
\end{lemma}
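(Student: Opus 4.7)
The plan is to derive the congruence from an exact binomial expansion of $f$ around $r$, bypassing the rational-looking Taylor coefficients that would appear from formal differentiation. Any $s$ with $s \equiv r \pmod{p^k}$ can be written as $s = r + tp^k$ for some $t \in \Z$, so it suffices to verify that $f(r + tp^k) \equiv f(r) \pmod{p^{k+1}}$ for arbitrary $t \in \Z$.

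Writing $f(x) = \sum_{i=0}^d c_i x^i$ and applying the binomial theorem to each $(r+tp^k)^i$, I would reorganize the resulting double sum by the power of $tp^k$ to obtain
\[ f(r+tp^k) = \sum_{j=0}^d B_j(r)\,(tp^k)^j, \qquad B_j(r) := \sum_{i=j}^d c_i \binom{i}{j} r^{i-j} \in \Z. \]
Direct inspection identifies $B_0(r) = f(r)$ and $B_1(r) = f'(r)$; these are the familiar Taylor coefficients, but displayed as manifest integers, with no division by $j!$ over $\Z$ needed.

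It then remains to bound the $p$-adic contribution of each summand with $j \ge 1$. For $j = 1$, the term is $f'(r)\cdot tp^k$, and the hypothesis $f'(r) \equiv 0 \pmod{p}$ immediately upgrades this to divisibility by $p^{k+1}$. For $j \ge 2$, the factor $(tp^k)^j$ already carries $p^{jk}$, and $jk \ge 2k \ge k+1$ as soon as $k \ge 1$, regardless of $B_j(r)$. Consequently every $j \ge 1$ term vanishes modulo $p^{k+1}$, forcing $f(r+tp^k) \equiv B_0(r) = f(r) \pmod{p^{k+1}}$, which is the desired conclusion in both equivalent forms stated.

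The only real obstacle is conceptual rather than technical: one must insist on deriving the expansion from the binomial theorem, so that integrality of the coefficients $B_j(r)$ is visible, rather than from Taylor's formula, which would only yield rational coefficients a priori. I note also that the hypothesis $f(r) \equiv 0 \pmod{p^k}$ itself never enters the implication — it merely situates the conclusion in its Hensel-style context — and the argument works verbatim for any integer $r$, not just $r \in A_1(p)$, matching the remark in the excerpt.
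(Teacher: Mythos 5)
Your argument is correct, and it is essentially the same as the paper's: the paper does not prove this lemma itself but cites a derivation via Taylor expansion of $f$ about $r$, which is exactly your expansion $f(r+tp^k)=\sum_j B_j(r)(tp^k)^j$ with the binomial theorem merely making the integrality of the Taylor coefficients $B_j(r)=f^{(j)}(r)/j!$ explicit. Your added observations (that $f(r)\equiv 0 \bmod p^k$ is not needed for the implication, and that any integer $r$ works) are accurate and consistent with the paper's remark about the cited source.
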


\noindent Notably, we have $p$ roots $\mathrm{mod} \ p^{k+1}$ when $f(r) \equiv 0 \mod p^{k+1}$. Thus Lemma \ref{lem:hensel02}  can lift roots modulo $p^k$ to roots modulo $p^{k+1}$. 
Conversely, all the roots modulo $p^{k+1}$ are obtained this way.

\section{Proof of the Main Theorem}

\begin{proof}[Proof of Theorem \ref{thm:deterministic-root-counting01}$(1)$] Recall that in Definition \ref{hypo:ascending-seperable-factorization01} we defined polynomials $\mathcal{L}_i \in \Z[x]$ such that $\pi_p (\mathcal{L}_i) = f_i$. Let $U := \{\widetilde{\zeta} \in A_2(p) \colon \zeta \in V_{p^2} (f)\}$, which is the disjoint union of the two sets $$S := \{u \in  U \colon \overline{u} \in V_{p}(\mathcal{L}_1)\} , \mbox{ and } T := U  \setminus S .$$ Recall that we defined $h_2 = \gcd (f_2 \cdots f_\ell, t) \in (\Z / p \Z) [x]$; this monic polynomial  is a product (possibly empty) of distinct linear terms. Let $D(x) \in \Z[x]$ be the lift of $h_2$ constructed analogously to the $\mathcal{L}_i$, taking the corresponding product of the $\widetilde{\bullet}$ lifts of the linear factors.  %$\rho (\zeta) = \pi_p (\zeta)$.  
To get \ref{thm:deterministic-root-counting01}(1), it suffices to show that as maps of sets (a) $\pi_p |_S:S \rightarrow V_p(\mathcal{L}_1)$ is a bijection, and (b) $\pi_p |_T: T \rightarrow V_p(D)$ is a $p$-to-$1$ surjection. %, where $h_2 = \gcd (f_2 \cdots f_\ell, t) \in (\Z / p \Z) [x]$. %; the root counting  formula follows immediately. 
But first, we record a lemma. 

\begin{lemma}\label{lem:root-reduction01}
Let $\rho \colon A_2(p) \to A_1(p) $ be the map of sets sending an element $a \in A_2(p)$ to its remainder after long division by $p$. Fix $r \in A_2(p)$. If $f(r) \equiv 0 \mod p^2$, then $f( \rho(r) ) \equiv 0 \mod p$. Equivalently, if $\overline{r} \in V_{p^2} (f)$, then $\overline{\rho(r)} \in V_{p} (f) $ in terms of the bar notation preceding Definition \ref{defn:reduction-mod-prime-powers}. 
\end{lemma}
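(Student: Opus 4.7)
The plan is to unwind the definitions and use only the fact that polynomial evaluation respects congruences. Concretely, given $r \in A_2(p)$, long division by $p$ writes $r = \rho(r) + kp$ for some integer $k$ with $0 \le k \le p-1$. Since $r \equiv \rho(r) \pmod{p}$ and $f \in \Z[x]$, one has $f(r) \equiv f(\rho(r)) \pmod{p}$; this is the standard fact that $\pi_p \colon \Z \twoheadrightarrow \Z/p\Z$ extends to an evaluation-compatible ring homomorphism on polynomials.

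Next, I would simply observe that the hypothesis $f(r) \equiv 0 \pmod{p^2}$ trivially implies $f(r) \equiv 0 \pmod{p}$, since $p \mid p^2$. Chaining this with the congruence of the previous paragraph yields $f(\rho(r)) \equiv 0 \pmod{p}$, which rephrased in the bar notation of Definition \ref{defn:reduction-mod-prime-powers} says $\overline{\rho(r)} \in V_p(f)$, as desired.

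There is no substantive obstacle here: the lemma is essentially a bookkeeping statement recording that reduction modulo $p^2$ refines reduction modulo $p$, and that the set-theoretic remainder map $\rho$ implements the ring-level projection $\pi_{p^2} \twoheadrightarrow \pi_p$ at the level of representatives in $A_2(p)$ and $A_1(p)$. The only care needed is to note that the choice of representative in $A_1(p)$ is well-defined precisely because long division produces a unique remainder in $\{0, 1, \ldots, p-1\}$, so $\rho$ is indeed a well-defined map of sets. With that confirmed, the argument above completes the proof in a line or two.
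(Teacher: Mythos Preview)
Your argument is correct and matches the paper's own proof essentially line for line: the paper also simply observes that $f(r) \equiv f(\rho(r)) \pmod{p}$ because $r \equiv \rho(r) \pmod{p}$, and then the hypothesis $f(r) \equiv 0 \pmod{p^2}$ gives the conclusion. Your additional remarks on well-definedness of $\rho$ are fine but not needed.
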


\noindent Indeed, if $f(a) = \sum_{i=0}^d c_{d-i} a^{d-i}$ for any $a \in \Z$, then $f(r) \equiv \sum_{i=0}^d c_{d-i} ( \rho (r) )^{d-i} = f (\rho(r))$ mod $p$.

%\noindent Indeed, one can specify a surjective ring homomorphism $(\Z / p^2 \Z) [x] \twoheadrightarrow (\Z / p \Z )[x]$

%\noindent Indeed, let $f(r) \equiv 0 \mod p^2$, i.e., $f(x) \equiv (x-r)q(x) \mod p^2$ for some $q \in \mathbb{Z}[x]$. Let $q(x) \equiv h(x) \mod p$. Then $f(x) \equiv  (x-\bar{r})h(x) \mod p$ and $f(\bar{r}) \equiv  0 \mod p$.

\noindent (a) \ $\pi_p |_S$ \textbf{is a bijection:} This is vacuous if $S$ is empty, so we may assume $S$ is non-empty. First, given any element $r \in U$, Lemma \ref{lem:root-reduction01} says $\pi_p (r) = \pi_p(\rho(r)) \in V_p(f)$, meeting the first hypothesis of Hensel's Lemma \ref{lem:hensel01}. Because of our stipulations in defining the polynomials $f_i$ in \eqref{eqn:ascending-separable-factorization}, Proposition \ref{prop:simple-roots-mod-p} applied to $h_1 = \pi_p(f)$ implies that $\pi_p (\rho(r))$  satisfies the second hypothesis under Hensel's Lemma \ref{lem:hensel01} if and only if $\pi_p (\rho(r)) \in V_p(\mathcal{L}_1)$. Equivalently, $r \in S$ and it will be the \textit{unique} lift to $A_2(p)$ of $\rho(r) \in A_1(p)$ as stipulated in Hensel's Lemma \ref{lem:hensel01}, since $r \equiv \rho(r) \mod{p}$. Thus we may conclude that $\pi_p |_S$ is both surjective and injective, hence bijective. % At this point, we invoke the two-part conclusion of Hensel's Lemma \ref{lem:hensel01} using $\rho(r) \in A_1(p)$. The part on existence of a lift in $A_2(p)$ implies $\pi_p |_S$ is surjective, while the part on uniqueness of lift gives injectivity. 
     %    By Lemma \ref{lem:root-reduction01}, $\pi_p |_S$ surjects onto $V_p(\mathcal{L}_1)$. It remains to show injectivity, i.e., if $\zeta_1, \zeta_2 \in S$ satisfy $\pi_p |_S(\zeta_1) = \pi_p |_S(\zeta_2)$, then $\zeta_1 = \zeta_2$. Let $\widehat{\zeta_i} := \pi_p (\rho(\zeta_i))$. Per the stipulations on the $f_i$ in the decomposition \eqref{eqn:ascending-separable-factorization} from Definition \ref{hypo:ascending-seperable-factorization01}, each $f_1'(\widehat{\zeta_i}) \neq 0 \in \Z / p \Z$ by Proposition \ref{prop:simple-roots-mod-p}. Thus 
%\begin{align*}  h_1'(\widehat{\zeta_i}) &= f_1'(\widehat{\zeta_i})\Big(f_2^2(\widehat{\zeta_i}) \cdots f_\ell^\ell(\widehat{\zeta_i})g(\widehat{\zeta_i})\Big) + f_1(\widehat{\zeta_i})(f_2^2 \cdots f_\ell^\ell g)'(\widehat{\zeta_i}) \\  &= f_1'(\widehat{\zeta_i}) f_2^2(\widehat{\zeta_i}) \cdots f_\ell^\ell (\widehat{\zeta_i})g(\widehat{\zeta_i})  \neq 0 \in  \Z / p \Z,  \end{align*}
%\noindent since the $f_i$ are separable and pairwise coprime.  %We thus conclude that $f (\rho (\zeta_i) ) \equiv 0 \mod p$ and $f'(\rho ( \zeta_i )) \not \equiv 0 \mod p$. 
%Hensel's Lemma \ref{lem:hensel01} says that there is a $\textit{unique} \ \nu_i \in U$ such that $\pi_p (\nu_i) =  \widehat{\zeta_i} \in \Z / p \Z$. Since $\widehat{\zeta_1} = \widehat{\zeta_2}$,  $\nu_1 = \nu_2 = \zeta_1 = \zeta_2$. Thus $\pi_p |_S$ is injective. 

\noindent Before proceeding, we record another lemma. %a fact for the degenerate roots of $h_1$ \cite[Sec.~13.5, Prop.~33]{DummitFoote04}: 

\begin{lemma}\label{lem:degenerate-roots-h1}
Given $r \in A_1(p) $ and $\overline{r} := \pi_p(r) \in \Z /p \Z$, the following assertions are equivalent to saying $ (x- \overline{r})^2 \mid  h_1$:
\begin{enumerate}
\item $\overline{r}$ is a degenerate root of $h_1$, i.e., both $f (r) \equiv 0$ mod $p$ and $f'(r) \equiv 0$ mod $p$. 
\item $(x-\overline{r}) \mid  f_i$ for some unique $i \ge 2$.  
\item $(x-\overline{r}) \mid  f_2 \cdot \ldots \cdot f_\ell$.  
\end{enumerate}
\end{lemma}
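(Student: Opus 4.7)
The plan is to establish the equivalences by first reducing the question to statements purely about the polynomial $h_1 \in (\Z/p\Z)[x]$ and its factorization \eqref{eqn:ascending-separable-factorization}, and then handling the equivalence of $(x-\overline{r})^2 \mid h_1$ with condition (1) via a short derivative argument, while handling its equivalence with (2) and (3) by unpacking the structural hypotheses on the $f_i$ and $g$.

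For the equivalence $(x-\overline{r})^2 \mid h_1 \Leftrightarrow (1)$, I would first note that since reduction mod $p$ commutes with formal differentiation, $h_1(\overline{r}) = \overline{f(r)}$ and $h_1'(\overline{r}) = \overline{f'(r)}$, so (1) is exactly the conjunction $h_1(\overline{r}) = 0$ and $h_1'(\overline{r}) = 0$. If $(x-\overline{r})^2 \mid h_1$, the product rule gives both vanishings at $\overline{r}$. Conversely, if $h_1(\overline{r}) = 0$, then $(x-\overline{r}) \mid h_1$; writing $h_1 = (x - \overline{r}) q(x)$ yields $h_1'(\overline{r}) = q(\overline{r})$, so if $h_1'(\overline{r}) = 0$ as well, then $(x - \overline{r}) \mid q$, i.e., $(x - \overline{r})^2 \mid h_1$. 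Proposition \ref{prop:simple-roots-mod-p} could equally be invoked here in contrapositive form.

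For the equivalences $(x-\overline{r})^2 \mid h_1 \Leftrightarrow (2) \Leftrightarrow (3)$, I would exploit the defining properties of the factorization in Definition \ref{hypo:ascending-seperable-factorization01}: the $f_i$ are monic, separable, and pairwise coprime, and $g$ has no roots in $\Z/p\Z$. The equivalence $(2) \Leftrightarrow (3)$ is immediate, since pairwise coprimality prevents $(x - \overline{r})$ from dividing two distinct $f_i$, so divisibility of the product in (3) is exactly divisibility of a unique factor as in (2). The implication $(2) \Rightarrow (x - \overline{r})^2 \mid h_1$ is clear because $(x-\overline{r}) \mid f_i$ with $i \geq 2$ forces $(x-\overline{r})^i \mid f_i^i \mid h_1$. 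Conversely, if $(x - \overline{r})^2 \mid h_1$, then $\overline{r}$ is a root of $h_1$; since $g$ has no roots in $\Z/p\Z$, $(x-\overline{r})$ must divide some $f_i$, and separability of $f_1$ together with its coprimality to $f_2 \cdots f_\ell \cdot g$ forces every root of $f_1$ to appear in $h_1$ with multiplicity exactly one, ruling out $i=1$ and hence giving $i \geq 2$.

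The main obstacle, modest as it is, lies in articulating cleanly that the factorization \eqref{eqn:ascending-separable-factorization} records the multiplicity of each root of $h_1$ exactly, so that simple divisibility by $(x - \overline{r})$ in the factor $f_i$ corresponds precisely to $\overline{r}$ being a root of $h_1$ of multiplicity $i$. Once this bookkeeping is made explicit, every implication in the lemma reduces to a one-line observation in $(\Z/p\Z)[x]$.
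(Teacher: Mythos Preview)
Your proof is correct and follows essentially the same approach as the paper: both rely entirely on the structural stipulations in Definition~\ref{hypo:ascending-seperable-factorization01} (separability and pairwise coprimality of the $f_i$, and $g$ having no roots in $\Z/p\Z$). The paper compresses the argument to a single line---observing that, in view of \eqref{eqn:ascending-separable-factorization}, each condition is equivalent to $\overline{r}$ being a root of $h_1$ with $f_1(\overline{r})\neq 0$---while you spell out the same reasoning via explicit derivative and divisibility computations.
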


\noindent Indeed, per the stipulations on the $f_i$ in  \eqref{eqn:ascending-separable-factorization}, 
all of these assertions mean $f_1 (\overline{r}) \neq 0$ in $\Z / p \Z$. 
%\begin{proof}
%Suppose $f(r) \equiv  0 \mod p$ and $(x-r) \ | \ f_2 \cdot \dots \cdot f_n$. Since the $f_i$ are relatively prime, we know $(x-r) \ | \ f_i$ for some $i \in \{2,...,n\}$ and  $(x-r)\nmid f_j$ for $j\neq i$. By hypothesis, the multiplicity of $(x-r) $ is $i$ where $i \in \{2,...,n\}$. Thus we may write 
%\begin{equation}
%f(x) = f_1 \cdot f_2^2 \cdot \dots \cdot (x-r)^i \cdot \dots \cdot f_n^ng + p(t(x))
%\end{equation}
%which can be reduced to 
%\begin{equation}
%f(x) = f_1 \cdot f_2^2 \cdot \dots \cdot (x-r)^i \cdot \dots \cdot f_n^ng \mod p = q(x)(x-r)^i \mod p
%\end{equation}
%Where \(q(x):= \displaystyle g\prod_{j \neq i}^{1 \leq j \leq n} f_j^j\). By the product rule, $f'(x) = q'(x)(x-r)^i + iq(x)(x-r)^{i-1}$. Since $i>1$, we can see that $f'(r) \equiv 0 \mod p$.
%\end{proof}

\noindent (b) \ $\pi_p |_T$ \textbf{is a $p$-to-$1$ surjection:} This is vacuous if $T$ is empty, so we may assume $T$ is non-empty. We note that $\pi_p (T) \subseteq V_p(\mathcal{L}_2 \cdots \mathcal{L}_\ell)$: given $r \in T$, Lemmas \ref{lem:root-reduction01} and \ref{lem:degenerate-roots-h1} apply to $\rho (r)$. 
Let $$E(x) := f(x) -  \widetilde{g} (x) \cdot   \prod_{i=1}^\ell \mathcal{L}_i^i (x) \in p \Z[x] = \ker \pi_p.$$ Then the integer polynomial $(1/p) \cdot E(x)$ is a lift of $t(x)$.  %We also note that %since $f(x) \equiv \widetilde{g} (x) \cdot   \prod_{i=1}^\ell \mathcal{L}_i^i (x) \mod p \Z[x]$, 
%$f(a) \equiv \widetilde{g}(a) \cdot \prod_{i=1}^\ell (\mathcal{L}_i (a))^i \mod p$ for all $a \in \Z$.   
Next, since $h_2$ divides $h_1$ in $(\Z/p\Z)[x]$, we note that any $r \in A_1 (p)$ for which $D (r) \equiv 0 \mod p$ also satisfies $\mathcal{L}_i (r) \equiv 0 \mod p$ for some $i \ge 2$ by Lemma \ref{lem:degenerate-roots-h1}. Thus $f(r) \equiv E(r) \mod p^2$. Additionally, $t(\overline{r}) = 0 \in \Z /p \Z$, so $(1/p) E(r) \equiv 0 \mod{p}$, hence $E(r) \equiv 0 \mod p^2$.  Then $f(r) \equiv 0 \mod p^2$, so  %if $E(r) \equiv 0 \mod p^2$
Hensel's Lemma \ref{lem:hensel02} says that  $r$ can be lifted to $p$ \textit{distinct} roots  $s_j = r + j \cdot p \in T$ of $f$ modulo $p^2$ where $0 \le j \le p-1$. 
%  $f(a) \equiv p \cdot E (a) \equiv 0 \mod p^2$ by  Lemma \ref{lem:degenerate-roots-h1}. 
%By Hensel's Lemma \ref{lem:hensel02}, % root $\bar{\zeta} \in Z_p(\widetilde{h_2})$ satisfies $h_1 (\bar{\zeta}) \equiv p \cdot t(\bar{\zeta}) \equiv 0 \mod p^2$, %, while $f_1(\bar{\zeta})$ is a unit mod $p^2$, 
%$\bar{\zeta}$ %also satisfies $f'(\bar{\zeta}) \equiv 0 \mod p$. Then by Hensel's Lemma, $\bar{\zeta}$ 
% $r$ can be lifted to $p$ \textit{distinct} roots  $\zeta_j = r + j \cdot p \in T$ of $f$ modulo $p^2$ where $0 \le j \le p-1$. %For each $j$, $h_2 (\zeta_j)$ divides both  $h_1(\zeta_j)$ and $t(\zeta_j)$, so $f (\zeta_j) \equiv  h_1(\zeta_j) + p \cdot t(\zeta_j) \equiv 0 \mod p^2$.  
%Such a $\zeta$ would then clearly satisfy $f(\zeta) = 0$. 
Thus $V_p(D) \subseteq \pi_p (T)$. %    Thus $W:= \{\overline{r} \in V_p(D) \colon r \in A_1(p) , E(r) \equiv 0 \mod p^2 \}  \subseteq \pi_p (T)$. 

To conclude that $\pi_p |_T$ is a $p$-to-1 surjection onto $V_p(D)$, it remains to show that conversely, given $u \in T$,  $\overline{u} := \pi_p (u) \in V_p(D)$. 
Since $\pi_p (T) \subseteq V_p(\mathcal{L}_2 \cdots \mathcal{L}_\ell)$, we have $(f_2 \cdots f_\ell) (\overline{u}) = 0  \in \Z/p \Z$ and  $(\mathcal{L}_i (u))^i \equiv 0 \mod p^2$ for some $i \geq 2$. Thus  $f(u) \equiv E(u) \equiv 0 \mod p^2$:  indeed, since $u \in T$, $f(u) \equiv 0 \mod p^2$. It follows that $(1/p) E (u) \equiv 0 \mod p$. Equivalently, $t (\overline{u}) = 0 \in \Z / p \Z$.  
We may conclude that $(x-\overline{u}) | (f_2 \cdots f_\ell)$ and $(x-\overline{u}) | t$ in $(\mathbb{Z}/p\mathbb{Z})[x]$, so by the definition of greatest common divisor $(x-\overline{u}) | h_2$ in $(\mathbb{Z}/p\mathbb{Z})[x]$. Thus $\overline{u} \in V_p(D)$.  This completes the proof of claim (b), so we are done. 
\end{proof}

%One could also ask how many of $f$'s degenerate roots modulo $p$ fail to lift to roots modulo $p^2$. We now record a corollary that follows from the argument for claim (b) in the above proof.

\begin{corollary}\label{cor:mod-p-roots-without-lifts}
\textit{With notation as in Definitions \ref{defn:reduction-mod-prime-powers} and \ref{hypo:ascending-seperable-factorization01}, exactly  
\begin{equation}\label{eqn:mod-p-roots-without-lifts}
\# \{a \in A_1(p) \colon \overline{a} \in V_p (\mathcal{L}_2 \cdots \mathcal{L}_\ell), \mbox{ } f(a) \not\equiv 0 \mod p^2\} = \deg (f_2 \cdots f_\ell) - \deg (h_2)
\end{equation}
degenerate roots of $f$ modulo $p$ fail to lift to roots of $f$ modulo $p^2$.} 
\end{corollary}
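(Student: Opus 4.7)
The plan is to leverage the structural understanding of degenerate mod-$p$ roots built up in the proof of Theorem \ref{thm:deterministic-root-counting01}(1), particularly Lemma \ref{lem:degenerate-roots-h1} and part (b).

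First I would identify the total count of degenerate roots of $f$ modulo $p$. By Lemma \ref{lem:degenerate-roots-h1}, the degenerate roots of $h_1 = \pi_p(f)$ are precisely the elements of $V_p(f_2 \cdots f_\ell)$, which coincides with $V_p(\mathcal{L}_2 \cdots \mathcal{L}_\ell)$ since $\pi_p(\mathcal{L}_i) = f_i$. Because each $f_i$ ($i \geq 2$) is a monic separable polynomial and the $f_i$ are pairwise coprime, $f_2 \cdots f_\ell$ splits as a product of $\deg(f_2 \cdots f_\ell)$ distinct monic linear factors over $\Z/p\Z$, so there are exactly $\deg(f_2 \cdots f_\ell)$ degenerate roots.

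Next I would count, among these, the ones that \emph{do} lift to roots of $f$ modulo $p^2$. This is precisely the content established in part (b) of the proof of Theorem \ref{thm:deterministic-root-counting01}(1): a degenerate root $\overline{r} \in V_p(\mathcal{L}_2 \cdots \mathcal{L}_\ell)$ lifts to a (in fact $p$) root(s) of $f$ modulo $p^2$ if and only if $t(\overline{r}) = 0$, equivalently (since $(x-\overline{r})$ already divides $f_2 \cdots f_\ell$) if and only if $(x - \overline{r}) \mid h_2 = \gcd(f_2 \cdots f_\ell, t)$. Since $h_2$ divides the squarefree product $f_2 \cdots f_\ell$, it too is a product of $\deg(h_2)$ distinct monic linear factors, giving $|V_p(D)| = \deg(h_2)$ liftable degenerate roots.

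Subtracting the liftable from the total gives the claimed count $\deg(f_2 \cdots f_\ell) - \deg(h_2)$. There is no serious obstacle here: the entire content is a bookkeeping consequence of the bijection analysis already done for the main theorem, together with the fact that the relevant polynomials $f_2 \cdots f_\ell$ and $h_2$ are squarefree and split into linear factors over $\Z/p\Z$, so that their root counts equal their degrees. The only minor point to be careful about is explicitly invoking Lemma \ref{lem:degenerate-roots-h1} to equate ``degenerate root modulo $p$'' with ``root of $f_2 \cdots f_\ell$'' at the outset.
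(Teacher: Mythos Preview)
Your proposal is correct and follows essentially the same route as the paper's own proof: both arguments identify the degenerate roots with $V_p(\mathcal{L}_2\cdots\mathcal{L}_\ell)$, invoke the analysis from part~(b) of Theorem~\ref{thm:deterministic-root-counting01}(1) to characterize the liftable ones as $V_p(D)$, and then use separability to convert root counts into degrees before subtracting. The only cosmetic difference is that the paper phrases the liftable set as $V_p(\mathcal{L}_2\cdots\mathcal{L}_\ell)\cap V_p[(1/p)E]$ while you phrase it via $t(\overline{r})=0$; these are identical since $t=\pi_p[(1/p)E]$.
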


\begin{proof}
To start, continuing from the proof of Theorem \ref{thm:deterministic-root-counting01}(1), the right-hand side is equal to $\# V_p(\mathcal{L}_2 \cdots \mathcal{L}_\ell) - \#V_p (D)$, since $f_2 \cdots f_\ell$ and $h_2$ are separable. 
%$\deg(f_2 \cdots f_\ell) = \# \{a \in A_1(p) \colon \overline{a} \in V_p (\mathcal{L}_2 \cdots \mathcal{L}_\ell) \}$, since $f_2 \cdots f_\ell$ is separable.   
Our argument for claim (b) in the proof of Theorem \ref{thm:deterministic-root-counting01}(1) suffices to show that $\pi_p (T) = V_p(D) = V_p (\mathcal{L}_2 \cdots \mathcal{L}_\ell) \cap  V_p [(1/p) E]$, and that the set stated in the corollary coincides with 
$\{a \in A_1(p) \colon \overline{a} \in V_p (\mathcal{L}_2 \cdots \mathcal{L}_\ell) - V_p [(1/p) E]\}$.
\end{proof}

\begin{proof}[Proof of Theorem \ref{thm:deterministic-root-counting01}$(2)$] First note that the decomposition \eqref{eqn:ascending-separable-factorization} stated under Definition \ref{hypo:ascending-seperable-factorization01} can be found via any classical factoring algorithm (see, e.g., \cite{BS96,BCS97}). %We will in fact not need the full power of polynomial factorization: We will see below that we only really need the extraction of the product of all factors of multiplicity one.
The $\mathrm{gcd}$ of polynomials in $(\mathbb{Z}/p\mathbb{Z})[x]$ of degree $\leq d$ can be computed in near linear time $O(d^{1+o(1)} (\log p)^{1+o(1)})$, per an algorithm of Knuth and Sch{\"o}nhage \cite[Ch.~3]{BCS97}. Also, division with remainder for polynomials of degree $\leq d$ in $(\mathbb{Z}/p\mathbb{Z})[x]$ takes time $O(d^{1+o(1)} \log p)$, and reduction mod $p$ of a polynomial $f \in \mathbb{Z}[x]$ can be done in time linear in $\mathrm{size}(f) + \log{p}$ (see, e.g., \cite[Ch.~3]{BCS97} and \cite[Ch.~7]{BS96}). Finally, note that the $\mathrm{gcd}$ of $h_1$ and $x^p-x$ can be computed in time $O(d^{1+o(1)}(\log{p})^{1 +o(1)})$ by applying the binary method to the computation of $x^p \mod h_1$ (see, e.g., \cite[pp. 102-104, 121-122, \& 170-171]{BS96}). 

Going forward, we may assume that the maximal multiplicity $\ell \ge 1$. Now observe that $s_1:=\gcd{(h_1,x^{p} -x)} \in (\mathbb{Z}/p\mathbb{Z})[x]$ has the property that $V_p(h_1) = V_p(s_1)$ and $s_1$ has exactly $\mathrm{deg}(s_1)$ distinct linear factors. 
In particular, $s_1$ factors as $f_1f_2 \cdots f_\ell$. Next, note that $s_2:= h_1 /s_1$   factors as $g \cdot \prod_{i=1}^\ell f_i^{i -1}$. So then, $s_3:=\mathrm{gcd}(s_1,s_2) =  \prod_{i=2}^\ell f_i$. %Note also that this $\mathrm{gcd}$ can be computed in time near linear in $\mathrm{deg}(f) + \log{p}$, since we can use the binary method to compute $x^{p-1} \mod{f}$ efficiently (see, e.g., [BS96, Ch. 7]). In particular, $s_1$ factors as $f_1f_2 \cdots f_l$. Next, note that $s_2:= f/s_1$ factors as $f_2f_3^2 \cdots f_l^{l-1}g$. 
So we can then compute $f_1$ as $s_1/s_3$ and $h_2$ as $\mathrm{gcd}(s_3,t)$ within $(\mathbb{Z}/p\mathbb{Z})[x]$. This amounts to $3$ $\mathrm{gcds}$ and $2$ divisions in $(\mathbb{Z}/p\mathbb{Z})[x]$, which is clearly within the stated complexity bound - provided we can compute $t$ efficiently. That $t$ can be computed efficiently is immediate since it only involves a distinct degree factorization in $(\Z/p\Z)[x]$, a subtraction in $\Z[x]$, % $(\mathbb{Z}/p^2\mathbb{Z})[x]$, 
and a single polynomial division (by $p$) in $\mathbb{Z}[x]$.
\end{proof}

To conclude, now that the main arguments have been recorded,  we certainly invite readers to either: (a) generate many simple examples to better appreciate the root counting formula under Theorem 
\ref{thm:deterministic-root-counting01}(1); or (b) try implementing the algorithm in a computer algebra system they find palatable. We close the paper by providing the following example. 

\begin{example}
Fix the prime $p=5$, and consider the polynomial $f \in \mathbb{Z}[x]$ defined by 
\begin{align*}
f(x) &= x(x+2)^2(x+4)^{5}(x+3)^{14} (x^3 +2x+1)  + 5 (x+2)(x+4) \\ 
&= x^{25} + 66 x^{24} + 2073 x^{23} + 41225 x^{22} + 582597 x^{21} + 6225421 x^{20}+ 52256469 x^{19}\\
&+ 353428921 x^{18} + 1960388179 x^{17} + 9032286149 x^{16}+ 34894415443 x^{15} \\
&+ 113842103703 x^{14} + 315375403239 x^{13}+ 745101000855 x^{12} + 1506289490631 x^{11}\\
&+ 2610867590739 x^{10}+ 3879338706288 x^9 + 4921047219861 x^8 + 5275209809592 x^7\\
&+ 4688604525204 x^6 + 3350344836816 x^5 + 1835957176704 x^4\\
&+ 716433486336 x^3 + 174686782469 x^2 + 19591041054 x + 40.
\end{align*}
In particular, invoking language in the proof of Theorem \ref{thm:deterministic-root-counting01}(1), we have 
\begin{align*}
h_1 (x) &= x(x-3)^2(x-1)^5 (x-2)^{14}(x^3 +2x+1) \in  (\Z/p\Z) [x] \\ 
f_1 &=x , \quad f_2=(x-3) , \quad f_5=(x-1), \quad 
f_{14}=(x-2), \quad g=x^3 + 2x + 1 , \\ 
t(x) &= (x-3)(x-1), \quad h_2 = \gcd(f_2 f_5 f_{14},t ) = (x-3)(x-1) \in (\Z / 5 \Z) [x]. 
\end{align*}
%where $h_2 = \gcd(f_2 f_5 f_{14},t ) = (x-3)(x-1)$ in $(\Z / 5 \Z) [x]$. 
Thus Theorem \ref{thm:deterministic-root-counting01}(1) says that  $$\# \{a \in A_2 (5) \colon f(a) \equiv 0 \mod 25 \} = \deg(f_1) + 5 \cdot \deg(h_2) = 1 + 5(2) = 11.$$ Now, $f(x) \equiv  0 \mod 5$ when $x = 0, 1, 2, 3 \in A_1 (5)$. 
The simple root $x= 0$ mod 5 lifts uniquely to the root $x=15$ mod 25 per Hensel's Lemma \ref{lem:hensel01}. Among the three degenerate roots mod 5,  only $x= 1$ and $x=3$ satisfy $f(x) \equiv 0 \mod 25$, and Hensel's Lemma \ref{lem:hensel02} lifts them 5-to-1. 
The values $x \in A_2 (5)$ for which $f(x) \equiv 0 \mod 25$ are 1, 3, 6, 8, 11, 13, 15, 16, 18, 21, and 23. We note that  % $\textcolor{olive}{1}$, $\textcolor{magenta}{3}$, $\textcolor{olive}{6}$, $\textcolor{magenta}{8}$, $\textcolor{blue}{10}$, $\textcolor{olive}{11}$, $\textcolor{magenta}{13}$, $\textcolor{olive}{16}$, $\textcolor{magenta}{18}$, $\textcolor{olive}{21}$, and $\textcolor{magenta}{23}$. $\textcolor{blue}{0 \equiv 10} \mod 5$ 
$1 \equiv 6 \equiv 11 \equiv 16 \equiv 21 \mod 5$, while  
$3 \equiv 8 \equiv 13 \equiv 18 \equiv 23 \mod 5$, as indicated under our discussion of Hensel's Lemma \ref{lem:hensel02}. In line with formula \ref{eqn:mod-p-roots-without-lifts} under Corollary \ref{cor:mod-p-roots-without-lifts}, we note in passing that only $x=2$ fails to lift to a root modulo 25. 
\end{example}

\section{Acknowledgements}
This research was conducted during  the MSRI Undergraduate Program in Summer 2017 with Dr. Federico Ardila serving as the on-site program director; we thank the Mathematical Sciences Research Institute 
and Dr. Ardila for the opportunity. 
The first three authors would like to thank their research advisor, Dr. J. Maurice Rojas, for exceptional guidance throughout the REU program. We thank two anonymous referees for feedback that improved the quality of exposition. Indeed, we thank one referee for asking whether we could deduce Corollary \ref{cor:mod-p-roots-without-lifts} above. We thank the Alfred P. Sloan Foundation and the National Science Foundation (Grant No. DMS-1659138) for providing financial support to run MSRI-UP this summer. 
We also acknowledge the partial REU support of MSRI's NSF grant (DMS-1440140),  NSF grant CCF-1409020,  
 and NSF DMS-1460766. 
Robert Walker was supported by a NSF GRF under Grant Number PGF-031543,  NSF RTG grant 0943832, and a Ford Foundation Dissertation Fellowship.

\bibliography{biblio} 

\end{document}